\documentclass[12pt]{amsart} 
\usepackage{euler}
\usepackage{euscript}
\usepackage{multicol} 
\usepackage{amssymb}

\usepackage{color}

\definecolor{amaranth}{rgb}{0.9, 0.17, 0.31}
\definecolor{ao}{rgb}{0.0, 0.0, 1.0}
\definecolor{ao(english)}{rgb}{0.0, 0.5, 0.0}
\definecolor{deepmagenta}{rgb}{0.8, 0.0, 0.8}

\usepackage{enumitem}

\usepackage[pdftex]{graphicx}
\usepackage{amsmath} 
\usepackage{times} 
 
\def\XXint#1#2#3{{\setbox0=\hbox{$#1{#2#3}{\int}$}
     \vcenter{\hbox{$#2#3$}}\kern-.5\wd0}}

\theoremstyle{plain}
\newtheorem{theorem}{Theorem}[section]
\newtheorem{proposition}{Proposition}[section]

\newtheorem{remark}{\bf Remark}[section]
\theoremstyle{definition}

\renewcommand{\epsilon}{\varepsilon}

\renewcommand{\phi}{\varphi}

\DeclareMathOperator\Arg{Arg}

\dedicatory{To I.M. Sigal}

\title
[Harmonic oscillator in the AB magnetic field]
{Time dependent Schr\"odinger equation for harmonic oscillator in the Aharonov-Bohm magnetic field}

\author{Jiyu Fan}
\address{Jiyu Fan:  Department of Mathematics, Imperial College London,
j.fan23@imperial.ac.uk}

\author{Ari Laptev}
\address{Ari Laptev:  Department of Mathematics, Imperial College London,
a.laptev@imperial.ac.uk}

\begin{document}

\maketitle

\begin{quote}
{\normalfont\fontsize{8}{10}
\selectfont{\bfseries  Abstract.}
We construct an approximation of the kernel of the solution of the time dependent Schr\"odinger equation whose Hamiltonian is a 2D harmonic oscillator in Aharonov-Bohm magnetic field. The main tools used here were established in  the paper of A. Laptev and I.M. Sigal \cite{LapSig} and also in \cite{LSV},  where the authors considered a class of Fourier Integral Operators with global complex phases approximating the fundamental solutions (propagators) for time-dependent Schr\"odinger equations. For the example considered in this paper we are able to find the main term in the approximation of the kernel that equals a version of the Mehler formula.
}
\end{quote}

\setcounter{equation}{0}
\section{Introduction}\label{Sec:1}

\noindent
Let us consider a harmonic oscillator in  Aharonov-Bohm (AB) magnetic field in $L^2(\Bbb R^2)$ 
\begin{equation}\label{H}
H _{\alpha,b,\omega} = \frac12 \left(-i \alpha\nabla - A\right)^2 + \frac{\omega^2}{2}|x|^2, \qquad x\in \Bbb R^2, \,\quad  \alpha, \omega>0,
\end{equation}
where 
$$
A(x) = \left(A_1(x) ,A_2(x)\right) = \left( -b\,\frac{x_2}{|x|^2}, b\, \frac{x_1}{|x|^2}\right).
$$
Here  the parameter $b\in\Bbb R$  is identified with the magnetic flux of the vector potential $A$ 
and $\alpha$ is a (small) semiclassical parameter. We also assume here that the operator $H _{\alpha,b,\omega}$ is understood as the Friedrichs extension defined by the closure from the class of functions $C_0^\infty(\Bbb R^2\setminus \{0\})$.

\medskip
\noindent
In this paper we propose a construction of a global Fourier Integral Operator (FIO)
giving an approximate fundamental solution (or propagator) to the equation
\begin{equation}\label{U}
i \alpha \frac{\partial }{\partial t} U_{\alpha,b,\omega}(t) = H_{\alpha,b,\omega} U_{\alpha,b,\omega}(t), \quad U_{\alpha,b,\omega}(0) = I,
\end{equation}
where $I$ is the identity operator in $L^2(\Bbb R^2)$.

\noindent
In the case $b\in \alpha \Bbb N$ the flux can be gauged away and in fact it is enough to consider the case where $0<b\le\alpha/2$ due to the symmetry $b\in[0,\alpha/2]$ and $b\in[\alpha/2, \alpha]$. 

\noindent
If for example $b=0$ and $\omega = 0$ then the kernel of the operator $U_{\alpha,0,0}$ can be easily constructed using the Fourier transform representing it as an oscillatory integral (kernel of a Fourier integral operator) 
$$
U_{\alpha,0,0}(t,x,y) = (2\pi \alpha)^{-2} \, \int_{\Bbb R^2} e^{i \varphi(x,y,\eta)/\alpha} \ d\eta,
$$
where $\varphi$ is the phase function
$$
\varphi(x,y,\eta)= (x-y)\cdot\eta - t \, |\eta|^2/2.
$$
In the theory of Fourier integral operators introduced in \cite{H1, H2, D}, 
the wave front set ${\rm WF}$ for oscillatory integrals coincides with the set in the cotangent bundle $T^*(\Bbb R^2\times \Bbb R^2)$, as follows 
$$
{\rm WF} (U_{\alpha,0,0}) =  \{ (x,\varphi_x; y,\varphi_y): \, \varphi_\eta = 0\} = \{ (x,\eta; y,-\eta): \, x= y+ t\eta\}.
$$
In the case $b=0$ and $\omega>0$, then the kernel $U_{\alpha,0,\omega}$ coincides with the well-known Mehler formula.

\medskip
\noindent
If $\omega= 0$ and $0<b\le \alpha /2$ the explicit formula for $U_{\alpha, b, 0}$ does not exist. In this case we deal with the propagator of the Aharonov-Bohm magnetic effect that states that the wave function of a charged quantum particle passing by a thin magnetic solenoid experiences a phase shift. This happens despite that there is no apparent interaction with the solenoid except through the
interaction of the particle with the ‘unphysical’ vector potential. This prediction
was made originally by Ehrenberg and Siday in 1949 \cite{ES}  and then again in
1959 by Aharonov–Bohm \cite{AB}. It cannot be explained in terms of classical mechanics, but was nevertheless experimentally verified (see \cite{BT}). It counts as one of the important quantum mechanical effects.

\medskip
\noindent
Denote by $h$ the hamiltonian related to the operator $H_\alpha$ 
\begin{equation}\label{h}
h(x,\xi) = \frac12 \left(\xi -A(x)\right)^2 + \frac{\omega^2}{2} |x|^2.
\end{equation}
In order to formulate the main result we consider the classical Hamilton-Jacobi system of equations 
 \begin{equation}\label{HJ}
\begin{cases}
\dot{x}^t = h_\xi (x^t,\xi^t) = \xi^t - A, & x^t |_{t=0} = y \not=0,\\
\dot{\xi}^t = - h_x(x^t,\xi^t) =  (\xi^t - A) \cdot (A)_{x} - \omega^2 \,x^t, & \xi^t |_{t=0} = \eta,
\end{cases}
\end{equation}
where we also assume that 
 \begin{equation}\label{y_eta}
y\wedge\eta = y_1\eta_2 - y_2\eta_1 \not = b.
\end{equation}
Let us introduce the action function
\begin{equation}\label{action}
S(t,y,\eta) = \int_0^t \left(h_\xi(x^s,\xi^s) \cdot \xi^s - h(x^s,\xi^s)\right)\, ds.
\end{equation} 
Let $\mathcal B>0$ and let 
\begin{equation}\label{phi}
\varphi(t,x,y,\eta) = S(t,y,\eta) + (x-x^t)\cdot \xi^t + i \mathcal B \,|x-x^t|^2/2.
\end{equation}
We also denote by $Z$ the matrix 
\begin{equation}\label{Z}
Z= Z(t,y,\eta) = \xi^t_\eta - i \mathcal B x^t_\eta.
\end{equation} 

\medskip
\noindent 
We introduce cut-off functions $\theta\in C^\infty(\Bbb R^2)$, such that 
\begin{equation*}
\theta(x) = 
\begin{cases}
1, & |x| >2,\\
0, & |x| < 1,
\end{cases}
 \end{equation*}
and $\theta_\varepsilon (x) = \theta(x/\varepsilon)$.

\medskip
\noindent
Now we are ready to formulate our main result:

\begin{theorem}\label{main1}
For any integer $N\ge 0$ and $\varepsilon>0$
there is an operator $U_N (t)$
such that the Schwartz kernel of $U_N (t)$ is of the form 
\begin{equation}\label{U_NN}
U_N (t,x,y) = (2\pi \alpha)^{-2} \int_{\Bbb R^2} e^{i\varphi / \alpha}  u_N \,d\eta,
\end{equation}
where $u_N = u_N (t,y,\eta , \alpha) = \sum\limits_{k=0}^N \alpha^k u_{(k)}(t,y,\eta) $
with 
$$ 
u_{(0)}(t,y,\eta) =  \theta_\varepsilon(x^t(y,\eta))\theta_\varepsilon(y) \, 
(\det Z(t,y,\eta))^{1/2}.
$$
Here we choose the continuous in $t$ branch of the square root of the determinant ${\rm det}\, Z$ 
assuming that ${\rm arg} \det Z|_{t=0} = 0$.

\noindent
The operator $U_N (t)$ approximates the propagator $U_{\alpha,b,\omega}(t)$ in the following sense 
$$ 
\left( i \alpha \frac{\partial }{\partial t}  - H_{\alpha,b,\omega} \right) U_N(t) = O(\alpha^{N-1}).
$$
\end{theorem}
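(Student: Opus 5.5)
The plan is to run the Laptev--Sigal construction \cite{LapSig,LSV} of FIOs with global complex phase, using the cut-offs $\theta_\varepsilon$ to stay away from the singularity of $A$ at the origin. The Hamiltonian $h$ is smooth on $\{x\neq 0\}$. The assumption $y\wedge\eta\neq b$ makes the angular momentum $(x^t)\wedge(\xi^t)-b$, which is conserved by the flow \eqref{HJ} since $h$ is rotation invariant, nonzero. Hence the trajectories stay away from $0$ for all $t$ in a bounded interval, and $x^t,\xi^t$ and $S$ are smooth on the support of $\theta_\varepsilon(x^t)\theta_\varepsilon(y)$.

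First apply $i\alpha\partial_t-H_{\alpha,b,\omega}$ to the integrand $e^{i\varphi/\alpha}u_N$. Using $\dot x^t=h_\xi$, $\dot\xi^t=-h_x$ and $\dot S=h_\xi\cdot\xi^t-h$, the derivative $-\partial_t\varphi$ agrees with $h(x,\varphi_x)$ to second order in $x-x^t$. Here $\varphi_x=\xi^t+i\mathcal B(x-x^t)$, and the standard computation gives $\partial_t\varphi+h(x,\varphi_x)=O(|x-x^t|^2)$, with the quadratic remainder being smooth near the trajectory.

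The remainders of order $|x-x^t|^2$ and higher are then traded for powers of $\alpha$ by integrating by parts in $\eta$. Since $\det Z\neq0$ (the Laptev--Sigal positivity argument: $Z$ is nondegenerate because the Lagrangian plane $\{(x^t_\eta v,\xi^t_\eta v)\}$ is transversal to the negative Lagrangian $\{\xi=-i\mathcal B x\}$, as it is the image of a real plane under a real symplectic map), we have $\partial_\eta\varphi=-(Z^{T})(x-x^t)$ up to using $S_\eta=\xi^t\cdot x^t_\eta$. So $x-x^t=-(Z^{T})^{-1}\varphi_\eta$, and $(x-x^t)^\beta e^{i\varphi/\alpha}$ can be written as $\alpha^{\lceil|\beta|/2\rceil}$ times derivatives falling on the amplitude. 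This reduces the equation to transport equations for $u_{(k)}$ along the flow.

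The leading transport equation has the form $\partial_t u_{(0)}=\tfrac12\,\mathrm{tr}(\dot Z Z^{-1})u_{(0)}$ (plus the divergence term from the first-order part $-i\alpha(\xi-A)\cdot\nabla$, where $\operatorname{div}A=0$). It is solved by $(\det Z)^{1/2}$, with the continuous branch and $\arg\det Z|_{t=0}=0$; the correct initial normalization comes from $Z|_{t=0}=I$, so that $U_N(0)$ is the identity microlocally via the Fourier inversion formula with Gaussian factor. Higher $u_{(k)}$ are obtained recursively by Duhamel integration with zero initial data.

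The cut-off factors are constant along the flow only where they equal $1$; elsewhere their derivatives produce errors supported in $\{\varepsilon\le|x^t|\le2\varepsilon\}$ or $\{\varepsilon\le|y|\le 2\varepsilon\}$. These are accepted as part of the error, or handled by the fact that the region is avoided. The main obstacle is justifying the $L^2$ estimate $O(\alpha^{N-1})$ for the remaining operators. The plan is to invoke the $L^2$-boundedness theorem of \cite{LapSig} for FIOs with such complex phases, with amplitudes in the appropriate symbol class. This needs uniform bounds on $x^t_\eta,\xi^t_\eta$ and $Z^{-1}$ for large $\eta$ and near $y\wedge\eta=b$; handling that degenerate set, presumably by the cut-offs ensuring $|x^t|\ge\varepsilon$, is where I expect the real difficulty. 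The loss of one power $\alpha^{-1}$ relative to $\alpha^{N}$ comes from the $\alpha^{-1}$ normalization when dividing the equation by $\alpha$ after the integration by parts.
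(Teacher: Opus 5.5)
Your formal derivation is the same as the paper's Section 4: the eikonal identity $\varphi_t+h(x,\varphi_x)=O(|x-x^t|^2)$, trading powers of $x-x^t$ for powers of $\alpha$ via $\varphi_\eta=Z^{T}(x-x^t)$ (without the minus sign), and the transport equation solved by $(\det Z)^{1/2}$ with $Z|_{t=0}=\Bbb I$. Your Lagrangian-transversality proof that $\det Z\neq0$ is the invariant form of the paper's identity $ZZ^*=\xi^t_\eta(\xi^t_\eta)^*+\mathcal B^2x^t_\eta(x^t_\eta)^*$, and it does not need the explicit formulas.

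One bookkeeping correction. The zeroth-order term produced by $H$ is not removed by ${\rm div}A=0$: it equals $-\frac{i\alpha}{2}\Delta_x\varphi=\alpha\mathcal B$. $(\det Z)^{1/2}$ solves the transport equation only because, after the integration by parts, this cancels the $-i\mathcal BZ$ piece of $x^t_\eta(h_{xx}+2i\mathcal Bh_{x\xi}-\mathcal B^2)=-\dot Z-Zh_{x\xi}-i\mathcal BZ$. The role of ${\rm div}A=0$ is only to give ${\rm Tr}\,h_{x\xi}=0$.

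The genuine gap is in the two places where you rely on the cut-offs; the paper's formal argument passes over both as well.

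First, $S$ is not smooth on the support of $\theta_\varepsilon(x^t)\theta_\varepsilon(y)$. That factor only sees the endpoint. For every $t>0$ there are $\eta$ with $y\wedge\eta=b$ whose radial trajectory passed through $0$ before time $t$ while still $|x^t|\ge2\varepsilon$. Across this set the winding term $b(y\wedge\eta-b)\int_0^t|x^s|^{-2}ds$ jumps by $2\pi b$ per passage, since the angle swept near the origin tends to $\pm\pi$ according to the sign of $y\wedge\eta-b$. So for $0<\omega t<\pi$ the factor $e^{iS/\alpha}$ jumps by $e^{2\pi ib/\alpha}\neq1$ (recall $0<b\le\alpha/2$). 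Consequently $S_\eta=x^t_\eta\xi^t$ holds only off this set. Each integration by parts in $\eta$ then leaves boundary terms there, concentrated near $x$ on the ray opposite to $y$ (the Aharonov--Bohm diffraction line), and their size does not improve with $N$.

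Second, $\theta_\varepsilon(x^t)$ is not constant along the flow. So $u_{(0)}$ solves the transport equation only up to $\theta_\varepsilon(y)(\det Z)^{1/2}\,\dot x^t\cdot\nabla\theta_\varepsilon(x^t)=O(1)$. Because $|e^{i\varphi/\alpha}|=e^{-\mathcal B|x-x^t|^2/2\alpha}$, this is negligible only for $x$ away from the annulus $\{\varepsilon\le|x|\le2\varepsilon\}$, so it cannot simply be ``accepted as part of the error''.

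The missing idea is to cut off in conserved quantities instead, e.g.\ by $\chi_1(y\wedge\eta-b)\,\chi_2(h(y,\eta))$ with $\chi_1$ vanishing near $0$ and $\chi_2\in C_0^\infty$. This factor is annihilated by $\partial_t$. On its support the whole trajectory satisfies $|x^s|\ge|y\wedge\eta-b|/\sqrt{2h(y,\eta)}$, the semi-minor axis of the ellipse. Hence $x^t,\xi^t,S$ and $Z^{-1}$ are smooth and uniformly bounded on a compact set in $(y,\eta)$, the $\theta_\varepsilon$ factors become redundant for small $\varepsilon$, and the $L^2$ estimate you worried about becomes routine. The price is that $U_N(0)$ is then the identity only microlocally, away from zero angular momentum and at bounded energy; near $y\wedge\eta=b$ the Aharonov--Bohm diffraction has to be treated separately.
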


\bigskip
\noindent
Originally the theory of FIO was introduced in the papers of  \cite{H2,DH}, see also \cite{H1,D}, where the authors considered real-valued phase functions. In order to avoid the problem with focal points of Hamiltonian flow, they used products of local FIOs. Another version of this approach can be found in V.P. Maslov \cite{Mas}. FIO theory with complex-valued phase functions allows one to construct the global FIO theory avoiding the problem of focal points, see \cite{LSV, SV} and this approach was extended to non-homogeneous symbols in \cite{LapSig}. 
Oscillatory integrals with complex phase functions in a different form are used in \cite{BU, CR, MS, R} and many others.

\medskip
\noindent
The study of properties of solutions of time-dependent Schr\"odinger equations is a classical problem. There are only a few examples where such solutions are known in the exact form. One of them is a well known Mehler formula for the case of harmonic oscillator  \cite{M}, see also  \cite{CFKS}. Some generalisations of such formulae were also obtained in the papers \cite{LapSig}, where the authors studied Laplacians with quadratic potentials, Laplacians with constant magnetic fields, including harmonic oscillators in a constant magnetic field, see also the case of the Heisenberg Laplacian considered in \cite{BGG}.

\medskip
\noindent
In Section \ref{Sec:5} using the stationary phase method we are able to integrate the main term in \eqref{U_NN} and obtain
\begin{theorem}
Let $\sin(\omega t)\not= 0$. Then 
\begin{equation}\label{main_term_UN} 
U_N(t,x,y)=\frac{\omega e^{iS(t,y,\eta_0)/\alpha}}{2\pi i\alpha\sin(\omega t)}(1+O(\alpha)),
\end{equation}
where 
$$
\eta_0(t,x,y)=A(y)+\frac{\omega}{\sin(\omega t)}(x-\cos(\omega t)y)
$$ 
and 
\begin{multline*}
 S(t,y,\eta_0(t,x,y))= \frac{\omega}{2\sin(\omega t)}\left(\cos(\omega t)(|x|^2+|y|^2)-2x\cdot y\right)\\
 +b(\Arg x-\Arg y+2\ell\pi).
\end{multline*}
\end{theorem}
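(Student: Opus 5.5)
The plan is to make the bicharacteristic flow \eqref{HJ} explicit, use it to compute the action $S$ and the matrix $Z$, and then evaluate the $\eta$-integral in \eqref{U_NN} for the main term $u_{(0)}$ by the stationary phase method with a complex phase.

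\emph{Step 1: explicit flow.} Set $p^t=\xi^t-A(x^t)$. Away from the origin $\operatorname{curl}A=0$, so the magnetic force vanishes. Hence \eqref{HJ} reduces to $\dot x^t=p^t$ and $\dot p^t=-\omega^2x^t$, which gives
$$
x^t=\cos(\omega t)\,y+\frac{\sin(\omega t)}{\omega}\,(\eta-A(y)),\qquad
\xi^t=A(x^t)-\omega\sin(\omega t)\,y+\cos(\omega t)\,(\eta-A(y)).
$$
In particular $x^t_\eta=\frac{\sin(\omega t)}{\omega}I$, which is invertible when $\sin(\omega t)\neq0$. Condition \eqref{y_eta} keeps the trajectory away from $0$. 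Splitting $h_\xi\cdot\xi-h=\tfrac12|p|^2-\tfrac{\omega^2}{2}|x|^2+p\cdot A(x)$, the action becomes the usual oscillator action plus $\int_0^t A(x^s)\cdot\dot x^s\,ds$. The latter equals $b$ times the total change of the angle along the path, that is $b(\Arg x^t-\Arg y+2\ell\pi)$, where $\ell$ is the winding number of the trajectory. Substituting $x^t=x$, i.e. $\eta=\eta_0$, into the oscillator part gives the Mehler action $\frac{\omega}{2\sin(\omega t)}\big(\cos(\omega t)(|x|^2+|y|^2)-2x\cdot y\big)$.

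\emph{Step 2: critical point.} Differentiating the action along the flow with $x^0=y$ fixed gives $S_\eta=(x^t_\eta)^T\xi^t$. Hence
$$
\varphi_\eta=(\xi^t_\eta)^T(x-x^t)-i\mathcal B\,(x^t_\eta)^T(x-x^t)=Z^T(x-x^t).
$$
Since $\operatorname{Im}\varphi=\mathcal B|x-x^t|^2/2\ge0$ and $\eta\mapsto x^t$ is an affine bijection, there is a unique real critical point, namely $x^t(y,\eta)=x$, i.e. $\eta=\eta_0(t,x,y)$. At this point $\varphi=S(t,y,\eta_0)$, and the Hessian is $\varphi_{\eta\eta}=-Z^Tx^t_\eta$. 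Its imaginary part is $\mathcal B(x^t_\eta)^Tx^t_\eta>0$, so it is nondegenerate. Moreover, for $|x|,|y|>2\varepsilon$ both cut-offs equal $1$ at $\eta_0$.

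\emph{Step 3: stationary phase.} Applying the stationary phase formula for phases with $\operatorname{Im}\varphi\ge0$ (H\"ormander, Melin--Sj\"ostrand) in two variables gives
$$
U_N=(2\pi\alpha)^{-2}(2\pi\alpha)\,\big(\det(-i\varphi_{\eta\eta})\big)^{-1/2}(\det Z)^{1/2}e^{iS/\alpha}(1+O(\alpha)).
$$
The factors of $\det Z$ cancel, leaving $(\det(i x^t_\eta))^{-1/2}=\big(i\sin(\omega t)/\omega\big)^{-1}$. This produces the prefactor $\frac{\omega}{2\pi i\alpha\sin(\omega t)}$. The terms $\alpha^ku_{(k)}$ with $k\ge1$, together with the higher stationary phase corrections, contribute only $O(\alpha)$.

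\emph{Main obstacle.} The delicate point is the consistent choice of branches. The branch of $(\det Z)^{1/2}$ is fixed by continuity from $t=0$, and the branch of $(\det(-i\varphi_{\eta\eta}))^{-1/2}$ is the one used in the complex stationary phase formula. One has to check that their product is exactly $\big(i\sin(\omega t)/\omega\big)^{-1}$ with the correct sign; a sign or Maslov-type factor could otherwise appear when $\omega t$ crosses multiples of $\pi$. I would first fix the sign for small $t>0$ by comparing with the free case ($\sin(\omega t)\approx\omega t$), and then argue by continuity of both sides in $t$ on each interval where $\sin(\omega t)\neq0$. A secondary issue is uniformity: one needs a non-stationary phase (integration by parts) argument for $\eta$ large, using $|\varphi_\eta|\gtrsim|x-x^t|$ and $\operatorname{Im}\varphi$. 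One must also make sure that the region where the cut-off $\theta_\varepsilon(x^t)$ varies (near $x^t=0$) stays away from the critical point.
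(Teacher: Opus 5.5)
Your proposal is correct and follows essentially the paper's own argument. You apply complex stationary phase in $\eta$ at the unique critical point $\eta_0$ (where $x^t(y,\eta)=x$), with $\varphi_{\eta\eta}=-Z^Tx^t_\eta$ and $x^t_\eta=\frac{\sin(\omega t)}{\omega}\Bbb I$, so that $(\det Z)^{1/2}$ cancels. Your evaluation of the angular part of $S$ as $\int_0^t A(x^s)\cdot\dot x^s\,ds$ is just a shortcut for the paper's algebra combined with conservation of angular momentum.

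The branch question you raise is passed over in the paper's \eqref{factor}. Your small-$t$ continuity argument settles it only for $0<\omega t<\pi$, since continuity within each later interval does not fix the sign there. It follows on every interval from Proposition \ref{Zinv}: $-i\varphi_{\eta\eta}=i\frac{\sin(\omega t)}{\omega}Z^T$ has eigenvalues $i\frac{\sin(\omega t)}{\omega}\mu_{1,2}$, and ${\rm Im}\,\mu_{1,2}=-\mathcal B\sin(\omega t)/\omega$ vanishes only where $\mu_{1,2}=\pm1$. Hence the continuous branch has $\arg\mu_{1,2}\in(-(k+1)\pi,-k\pi)$ for $\omega t\in(k\pi,(k+1)\pi)$, which gives exactly $(\det(-i\varphi_{\eta\eta}))^{1/2}=i\frac{\sin(\omega t)}{\omega}(\det Z)^{1/2}$ with no extra sign.
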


\medskip
\noindent
The main term in \eqref{main_term_UN} almost coincides with the Mehler formula. The only difference is  the additional term $b(\Arg x-\Arg y+2\ell\pi)$ in the phase that is the influence of the AB magnetic field. Here $\ell$ is determined by a continuous lift.

\smallskip
\noindent
Finally, we would like to note in \cite{Han} the author used polar coordinates and, by applying a separation of variables, could find the eigenvalues and eigenfunctions of the operator \eqref{H} explicitly.


\section{Solutions of the Hamilton-Jacobi equations}\label{Sec:2}

\noindent
In this section we consider some properties of solutions of the Hamilton-Jacobi system of equations \eqref{HJ}.
Differentiating the first equation in \eqref{HJ} and using the second one we find
\begin{multline}\label{secdd}
\ddot{x}_k^t = \dot{\xi}_k^t - \nabla{A_k}\cdot{\dot{x}}^t =
(\xi^t - A) \cdot (A)_{x_k}  -\omega^2 x_k^t
- \nabla{A_k}\cdot{\dot{x}}^t 
\\= 
\sum_{j=1}^2 (\xi_j - A_j) \left[ (A_j)_{x_k} - (A_k)_{x_j}\right] - \omega^2 x_k^t
 \\= (-1)^{k+1} 2\pi b (\xi_{3-k} - A_{3-k}(x^t)) \, \delta(x^t) - \omega^2 x_k^t, \quad k=1,2, 
\end{multline}
with the initial conditions 
$$
x^t |_{t=0} = y, \qquad \dot{x}^t |_{t=0} = \eta - A(y).
$$
If $x^t$ does not meet zero then the solutions of the Hamiltonian equations can be found explicitly 
\begin{align}\label{HJsol}
\begin{split}
&x^t = \cos(\omega t) \,y + \frac{\sin(\omega t)}{\omega} (\eta -A(y)), \\
&\xi^t =  \dot{x}^t + A(x^t) = -\omega \sin(\omega t) \,y +  \cos(\omega t) \left(\eta - A(y)\right) + A(x^t). 
\end{split}
\end{align}

\begin{remark}
From the equations \eqref{HJsol} we immediately obtain that $(x^t, \xi^t)$ are periodic trajectories whose periods are $2\pi /\omega$.
\end{remark} 

\medskip
\noindent
Let us consider the equation $x^t(y,\eta) = 0$, namely,
\begin{align} \label{xt0}
\begin{split}
& \cos(\omega t) \,y_1 + \frac{\sin(\omega t)}{\omega}  \, \left(\eta_1 +b\,  \frac{y_2}{|y|^2}\right) = 0, \\
 & \cos(\omega t) \, y_2 + \frac{\sin(\omega t)}{\omega}  \, \left(\eta_2 - b\,  \frac{y_1}{|y|^2}\right) = 0. 
 \end{split}
\end{align} 
Multiplying the first equation by $y_2$, the second one by $y_1$ and subtracting them from each other we 
have
\begin{equation}\label{eq1}
\sin(\omega t) \left( y \wedge \eta - b\right) = 0,
\end{equation} 
where $y \wedge \eta= y_1\eta_2 - y_2 \eta_1$.

\noindent
Besides, multiplying the first equation in \eqref{xt0}  by $y_1$, the second one by $y_2$ and adding them , we obtain
\begin{equation}\label{eq22}
\cos(\omega t) \, |y|^2 + \frac{\sin(\omega t)}{\omega} \,\eta\cdot y = 0.
\end{equation} 
Note that if in \eqref{eq1} $\sin(\omega t) = 0$ then the equation \eqref{eq22} cannot be fulfilled. This implies that the equation $y \wedge \eta = b$ plays a prime role.

\medskip
\noindent
We can now interpret the identity \eqref{eq1} .
Let us introduce the angular momentum of the flow
$$
L  = x^t\wedge \dot{x}^t.
$$

\begin{proposition}\label{AngMom}
The angular momentum $L$ is independent of $t$. 
Moreover, the equation $x^t(y,\eta) = 0$  implies $L= L(y,\eta)  = 0$.
\end{proposition}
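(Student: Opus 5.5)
The plan is to differentiate $L = x^t\wedge\dot x^t$ in $t$ along the flow and use the second-order equation \eqref{secdd}. We have
$$
\frac{d}{dt}L = \dot x^t\wedge\dot x^t + x^t\wedge\ddot x^t = x^t\wedge \ddot x^t .
$$
Away from the origin the $\delta$-term in \eqref{secdd} vanishes, so $\ddot x^t = -\omega^2 x^t$ and $x^t\wedge\ddot x^t = -\omega^2\, x^t\wedge x^t = 0$. Hence $L$ is constant on any time interval on which $x^t\neq 0$.

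Equivalently, we can verify the constancy directly from the explicit solution \eqref{HJsol}. Write $v=\eta-A(y)$, so that $x^t=\cos(\omega t)y+\omega^{-1}\sin(\omega t)v$ and $\dot x^t=-\omega\sin(\omega t)y+\cos(\omega t)v$. Then
$$
x^t\wedge\dot x^t=(\cos^2(\omega t)+\sin^2(\omega t))\,y\wedge v=y\wedge v .
$$
We evaluate this using $y\wedge A(y)=b\,(y_1^2+y_2^2)/|y|^2=b$, which gives $L=y\wedge\eta-b$. This explicit form also exhibits $L(y,\eta)$ as a function of the initial data only, as claimed.

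For the second assertion, suppose $x^{t_0}(y,\eta)=0$ for some $t_0$. The computation leading to \eqref{eq1} gives $\sin(\omega t_0)(y\wedge\eta-b)=0$. As noted after \eqref{eq22}, $\sin(\omega t_0)=0$ is impossible: in that case \eqref{eq22} would force $\cos(\omega t_0)|y|^2=0$, contradicting $y\neq0$ and $|\cos(\omega t_0)|=1$. Therefore $y\wedge\eta=b$, that is, $L=0$. Alternatively, at $t=t_0$ we have $L=x^{t_0}\wedge\dot x^{t_0}=0\wedge\dot x^{t_0}=0$ directly, and by constancy $L\equiv0$.

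The only delicate point is the singular $\delta$-term in \eqref{secdd} at the moment the trajectory hits the origin. The approach is to avoid it entirely by working with the explicit formula \eqref{HJsol} on the interval preceding the first zero. There $L=y\wedge\eta-b$ is constant, and by continuity it equals its value at $t_0$, namely $0$. So constancy is claimed, and used, on the maximal interval where $x^t\neq 0$, which is the regime in which \eqref{HJsol} is valid.
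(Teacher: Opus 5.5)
Your proof is correct and follows the paper's argument: $L$ is constant because $x^t\wedge\ddot x^t=-\omega^2\,x^t\wedge x^t=0$ away from the origin, and \eqref{eq1} (with $\sin(\omega t_0)\neq 0$) together with $y\wedge A(y)=b$ then gives $L=y\wedge\eta-b=0$. Your additional check that $x^t\wedge\dot x^t=y\wedge(\eta-A(y))$ holds directly from \eqref{HJsol} identifies $L$ with its initial value without ever dealing with the singular $\delta$-term at the hitting time.
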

\begin{proof}
Clearly if $x^t(y,\eta) \not=0$ then using \eqref{secdd} 
$$
\frac{\partial}{\partial t} L = x^t \wedge \ddot{x}^t  = -\omega^2 \, x^t\wedge x^t = 0
$$
and thus $L$ is constant in $t$.

\medskip
\noindent
Using \eqref{eq1} $x^t(y,\eta) = 0$ we find $ y \wedge \eta = b$ and due to 
the Hamilton-Jacobi equation \eqref{HJ} we obtain
$$
L = x^t\wedge \dot{x}^t = x^t\wedge \xi^t - x^t\wedge A(x^t).
$$
Since 
$$
x^t\wedge A(x^t) = \frac{b}{|x^t|^2} \left[ (-x_2^t)^2 + (x_1^t)^2\right] = b
$$
we conclude
$$
L= L\big|_{t=0} = x^t \wedge \xi^t\big|_{t= 0} - b = 0.
$$ 
\end{proof}

\begin{remark}\label{inv}
\noindent
\smallskip
Assuming $L=0$ we have  $y\wedge \eta = b$. Then the equation \eqref{eq22} 
\begin{equation*}
\cos(\omega t) \, |y|^2 + \frac{\sin(\omega t)}{\omega} \,\eta\cdot y = 0.
\end{equation*} 
gives us time $t= t(y,\eta)$ when $x^t(y,\eta) = 0$.
\end{remark}

\medskip
\noindent
The conservation of the angular momentum tells us that if
$L(0)\not= 0$ (or $ y \wedge \eta \not= b$), we must have $L(t) = L(0)\not= 0$ for all
$t\ge0$. Consequently $x^t$  stays away from the origin for all $t\ge0$. On the other
hand, if $y\wedge\eta = b$, it means the instantaneous velocity of $x$ can point exactly at
$0$ either inward or outward. Therefore, $x$ moves
along a purely radial (i.e. the force has no tangential component) direction that
reaches the origin in finite time, see the graph below.

%

\begin{figure}[!htbp]
\centerline{\includegraphics[scale=.57]{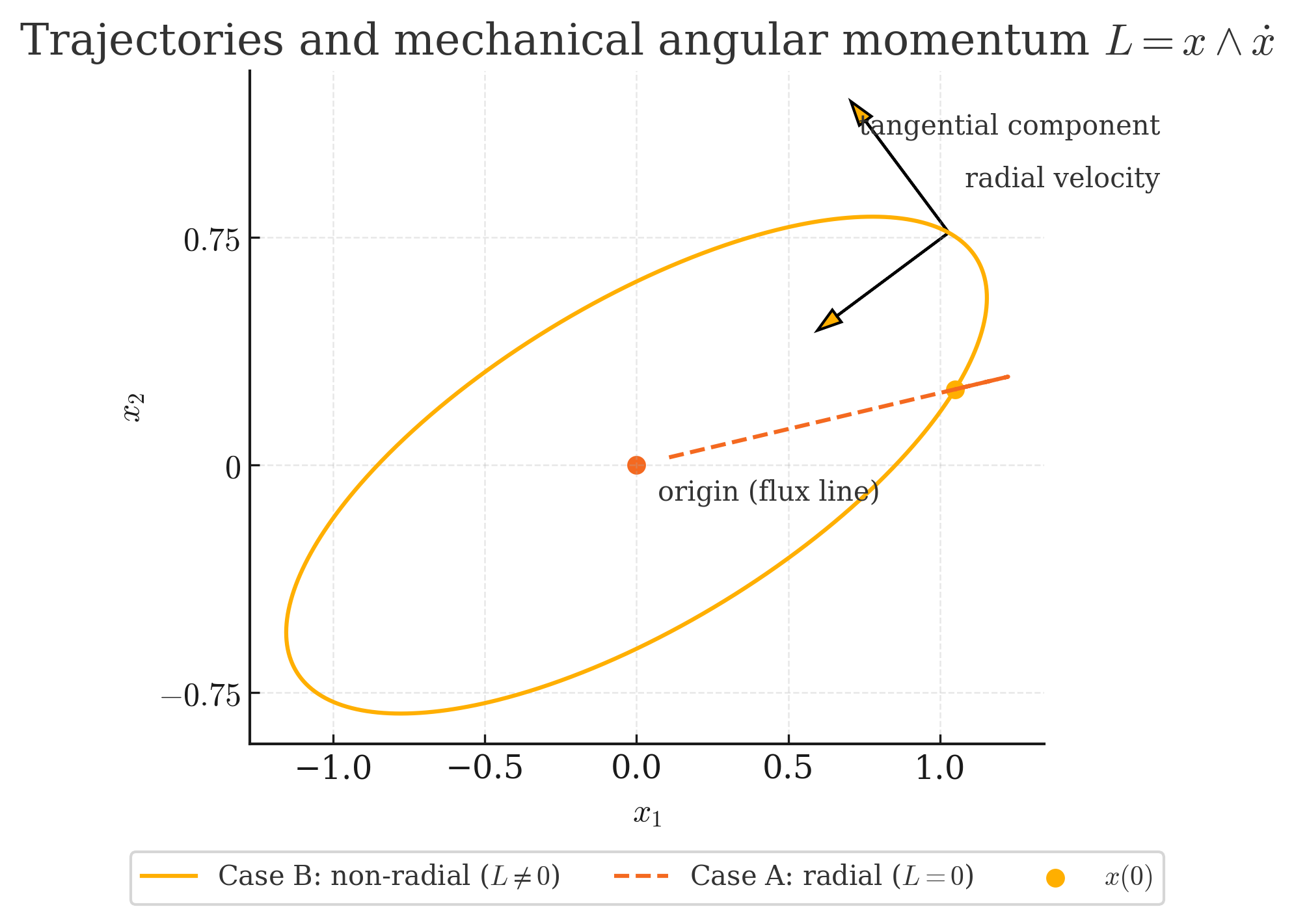}}
\end{figure}

\begin{remark}
The equation for $x^t$ obtained in \eqref{HJsol}
$$
x^t(y,\eta) = \cos(\omega t) \,y + \frac{\sin(\omega t)}{\omega} (\eta -A(y))
$$
defines an ellipse centered at the origin that is non-degenerate if and only if $L\neq 0$.
\end{remark}


\section{Action and phase functions}\label{Sec:3}

\noindent
Using the definition of the action function $S$ defined in \eqref{action} for the Hamiltonian $h$ we find 
\begin{align*}
S(t,y,\eta) &= \int_0^t \left(h_\xi(x^s,\xi^s) \cdot \xi^s - h(x^s,\xi^s)\right)\, ds\\
&=
\int_0^t \left( (\xi^s - A(x^s))\cdot  \xi^s  - \frac12 (\xi^s - A(x^s))^2 - \frac12 \omega^2\,|x^s|^2 \right)\, ds
\\
&=
\frac12\, \int_0^t \left( |\xi^s|^2 - |A(x^s)|^2  -  \omega^2\,|x^s|^2 \right)\, ds.
\end{align*}
 Since $\xi = (\xi-A) + A$ we have
 $$
 |\xi^s|^2 = |\xi^s - A(x^s)|^2 +2 (\xi^s - A(x^s) )\cdot A(x^s) + |A(x^s)|^2
 $$
 and therefore 
$$
|\xi^s|^2 - |A(x^s)|^2 = |\xi^s - A(x^s)|^2  + 2\left( \xi^s -A(x^s)\right) \cdot A(x^s).
$$
Moreover, using $x^s= y \cos(\omega s) + \frac{\sin(\omega s)}{\omega} (\eta - A(y))$ we find
\begin{multline}\label{|x|}
|x^s|^2 =  |y|^2 \, \cos^2(\omega s) +  \frac{\sin^2(\omega s)}{\omega^2} 
\left(|\eta|^2- 2b(y\wedge\eta - b/2) |y|^{-2}\right) \\
+ \frac{\sin(2\omega s)}{\omega} y\cdot \eta.
\end{multline}
Similarly, as $\xi^s-A(x^s)=\dot{x}^s=-\omega\sin(\omega s)y+\cos(\omega s)(\eta-A(y))$, we have
 \begin{multline*}
 |\xi^s-A(x^s)|^2=\omega^2\sin^2(\omega s)|y|^2+\cos^2(\omega s)(|\eta|^2-2b(y\wedge\eta-b/2)|y|^{-2})\\
 -\omega\sin(2\omega s)y\cdot \eta
\end{multline*}
and 
$$
2 (\xi^s - A(x^s)) \cdot A(x^s) = 2(-\omega\sin(\omega s)y+\cos(\omega s)(\eta-A(y)))\cdot A(x^s).
$$
We have $y\cdot A(y) = 0$ and also 
\begin{align*}
 -\sin(\omega s)\,  y &\cdot A(x^s)  \\
 & = 
 - \sin(\omega s) \,y_1  b \left(y_2 \cos(\omega s) + \left(\eta_2 - b \frac{y_1}{|y|^2}\right) \frac{\sin(\omega s)}{\omega}
 \right) |x^s|^{-2}\\
&  
- \sin(\omega s) \, y_2   b\left(-y_1\cos(\omega s) -  \left(\eta_1 + b\frac{y_2}{|y|^2}\right) \frac{\sin(\omega s)}{\omega} \,\right) |x^s|^{-2}\\
& = b \, \omega^{-1} (y\wedge \eta - b)\, |x^s|^{-2} \sin^2(\omega s).
\end{align*}
Similarly 
\begin{equation*}
 \cos(\omega s) \,(\eta - A(y)) \cdot A(x^s)  
= b ( y\wedge \eta - b)\, |x^s|^{-2} \cos^2(\omega s).
\end{equation*}
Finally we conclude 

\begin{align*}
S(t,y,\eta)=&\frac{\cos(2\omega t)-1}{2} y\cdot\eta \\
&+\frac{\sin(2\omega t)}{4\omega}(|\eta-A(y)|^2-\omega^2|y|^2)+b(y\wedge \eta-b)\int_0^t|x^s|^{-2}ds.
\end{align*}
According to Proposition \ref{AngMom} the angular momentum $L = x^t\wedge \dot{x}^t$ is a constant in $t$ and equals $L(y,\eta) = y\wedge \eta - b$. Therefore 
\begin{multline*}
(y\wedge\eta -b) \int_0^t |x^s|^{-2} \, ds =  \int_0^t  (x^s\wedge \dot{x}^s)  |x^s|^{-2} \, ds \\
= 
\int_0^t \frac{x_1^s \dot{x}_2^s - x_2^s \dot{x}_1^s}{(x_1^s)^2 + (x_2^s)^2}\, ds = 
{\rm Arg}\,(x^t)-{\rm Arg}\,(y)+2 \ell\pi. 
\end{multline*}

\medskip

\begin{proposition}\label{SProp}
The action function  \eqref{action} satisfies the following identity 
\begin{equation}\label{Seta}
S_\eta = x^t_\eta \xi^t \quad {\rm and} \quad S_y = x_y^t \xi^t - \eta.
\end{equation}
\end{proposition}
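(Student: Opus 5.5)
We prove \eqref{Seta} by the standard Hamilton--Jacobi argument: differentiate the action \eqref{action} under the integral sign and use the Hamiltonian system \eqref{HJ} to turn the integrand into a total derivative in $s$. Throughout we work on the set where \eqref{y_eta} holds. By Proposition \ref{AngMom} the trajectory $x^s$ then stays away from the origin for all $s$, so $h$ is smooth along the flow. The maps $(y,\eta)\mapsto (x^s,\xi^s)$ are smooth by \eqref{HJsol}, and differentiation under the integral is justified.

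Write $\partial$ for either $\partial_{\eta_j}$ or $\partial_{y_j}$. The integrand of \eqref{action} is $h_\xi\cdot\xi - h$, and differentiating it gives
$$
\partial\bigl(h_\xi\cdot\xi^s - h\bigr) = (\partial h_\xi)\cdot \xi^s + h_\xi\cdot\partial\xi^s - h_x\cdot\partial x^s - h_\xi\cdot\partial \xi^s = (\partial \dot x^s)\cdot\xi^s + \dot\xi^s\cdot \partial x^s .
$$
Here we used $h_\xi(x^s,\xi^s)=\dot x^s$ and $-h_x=\dot\xi^s$ from \eqref{HJ}, and we interchanged $\partial$ with $d/ds$, which is legitimate by smoothness. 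The right-hand side is exactly $\frac{d}{ds}\bigl(\partial x^s\cdot\xi^s\bigr)$.

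Integrating from $0$ to $t$ gives
$$
\partial S = \partial x^t\cdot \xi^t - \partial x^s\cdot\xi^s\big|_{s=0}.
$$
The boundary term at $s=0$ is read off from the initial data $x^0=y$ and $\xi^0=\eta$:

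\begin{itemize}
\item For $\partial=\partial_\eta$ we have $x^0_\eta=0$, so the boundary term vanishes and $S_\eta = x^t_\eta\,\xi^t$.
\item For $\partial=\partial_y$ we have $x^0_y=I$, so the boundary term is $\eta$ and $S_y = x^t_y\xi^t-\eta$.
\end{itemize}

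Both expressions are understood with the matrix convention $(x^t_\eta\xi^t)_j=\sum_k \partial_{\eta_j}x^t_k\,\xi^t_k$.

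The only delicate point is the singularity of $A$ at the origin, where \eqref{secdd} contains a $\delta$-term. This is exactly why we restrict to $y\wedge\eta\neq b$. On that set $L\neq0$, so $|x^s|$ is bounded below on $[0,t]$ locally uniformly in $(y,\eta)$, and every step above is classical.

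As an independent check, one could differentiate the explicit formula for $S$ from Section \ref{Sec:3}. There the $\operatorname{Arg}$ term contributes $\partial(\operatorname{Arg} x^t-\operatorname{Arg} y)$, whose derivatives are $A(x^t)\cdot\partial x^t/b$ and $A(y)/b$. These combine with the other terms to reproduce $\xi^t=\dot x^t+A(x^t)$. The general argument above, however, avoids this computation entirely.
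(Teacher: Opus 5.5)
Your proof is correct and essentially the paper's argument. The paper differentiates both sides in $t$, shows $\partial_t S_\eta=\partial_t(x^t_\eta\xi^t)$ (and likewise for $y$) via \eqref{HJ}, and matches the values at $t=0$; you integrate that same total-derivative identity over $[0,t]$ and pick up the same boundary terms, and your explicit restriction to $y\wedge\eta\neq b$, which keeps the flow away from the singularity of $A$, is a reasonable addition the paper leaves implicit.
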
 
\begin{proof}
Both sides of each identity in \eqref{Seta} are equal to zero as $t=0$. Then using \eqref{HJ}
\begin{align*}
\frac{\partial}{\partial t} S_\eta &= \frac{\partial}{\partial \eta} (h_\xi \cdot \xi - h) \\
&= \dot{x}^t_\eta \xi^t +   \dot{x}^t \xi_\eta^t  - x_\eta^t h_x - h_\xi \xi_\eta^t \\
&= \dot{x}^t_\eta \xi^t + x^t_\eta \dot{\xi}^t = \frac{\partial}{\partial t} \left( x_\eta^t \xi^t \right).
\end{align*} 
This implies the first identity in \eqref{Seta} since $x^t_\eta \xi^t|_{t=0} = 0$.

\noindent
Moreover, analogously we have
\begin{align*}
\frac{\partial}{\partial t} S_y &= \frac{\partial}{\partial y} (h_\xi \cdot \xi - h) \\
&= \dot{x}^t_y \xi^t +   \dot{x}^t \xi_y^t  - x_y^t h_x - h_\xi \xi_y \\
&= \dot{x}^t_y \xi^t + x^t_y \dot{\xi}^t = \frac{\partial}{\partial t} \left( x_y^t \xi^t \right).
\end{align*} 
and using the initial $x^t_y \xi^t|_{t=0} = \eta$ we obtain the second identity in \eqref{Seta}.

\end{proof} 

\medskip
\begin{proposition}\label{A_x1}
The matrix $h_{x,\xi} = - \partial_x A(x) = - A_x$ is self-adjoint and 
$$
\det A_x = - b^2 \,|x|^{-4} \quad {\rm and} \quad {\rm Tr}  \,A_x = 0.
$$
The eigenvalues of $A_x$ are equal to
$$
\lambda_1(A_x) = b\, |x|^{-2} \quad {\rm and} \quad \lambda_2(A_x) = - b\, |x|^{-2}.  
$$
\end{proposition}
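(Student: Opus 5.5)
The plan is a direct computation of the Jacobian matrix $A_x=(\partial_{x_j}A_k)$ on $\Bbb R^2\setminus\{0\}$, followed by reading off its invariants. First, since $h(x,\xi)=\frac12(\xi-A(x))^2+\frac{\omega^2}{2}|x|^2$, we have $h_\xi=\xi-A(x)$, and differentiating in $x$ gives $h_{x\xi}=-A_x$ (up to the transposition convention, which is irrelevant once symmetry is shown). So the first claim reduces to the symmetry of $A_x$.

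Next I will compute the four entries explicitly from $A_1=-bx_2|x|^{-2}$, $A_2=bx_1|x|^{-2}$:
\begin{align*}
\partial_{x_1}A_1&=2b\,x_1x_2|x|^{-4}, \qquad \partial_{x_2}A_1=-b\,(x_1^2-x_2^2)|x|^{-4},\\
\partial_{x_1}A_2&=b\,(x_2^2-x_1^2)|x|^{-4}, \qquad \partial_{x_2}A_2=-2b\,x_1x_2|x|^{-4}.
\end{align*}
The off-diagonal entries coincide, which is exactly the vanishing of $\operatorname{curl}A$ away from the origin (consistent with \eqref{secdd}, where only the $\delta$-term survives). Hence $A_x$ is real symmetric, i.e.\ self-adjoint. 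The diagonal entries are negatives of each other, so ${\rm Tr}\,A_x=0$; this is $\operatorname{div}A=0$.

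For the determinant I will compute
$$\det A_x=-|x|^{-8}b^2\left(4x_1^2x_2^2+(x_1^2-x_2^2)^2\right)=-b^2|x|^{-8}(x_1^2+x_2^2)^2=-b^2|x|^{-4}.$$
Finally, a $2\times2$ matrix with trace $0$ and determinant $-b^2|x|^{-4}$ has characteristic polynomial $\lambda^2-b^2|x|^{-4}$, so its eigenvalues are $\pm b|x|^{-2}$. There is no genuine obstacle here. The only points needing care are the sign bookkeeping in the off-diagonal entries and the recognition of the identity $4x_1^2x_2^2+(x_1^2-x_2^2)^2=|x|^4$. Optionally, writing $x=|x|(\cos\phi,\sin\phi)$ shows that $A_x=b|x|^{-2}\begin{pmatrix}\sin2\phi&-\cos2\phi\\-\cos2\phi&-\sin2\phi\end{pmatrix}$, which confirms the result at a glance.
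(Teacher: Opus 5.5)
Your proposal is correct and takes the same route as the paper: differentiate $A$ directly to get the explicit symmetric matrix \eqref{A_x}, then read off the trace, determinant and eigenvalues. You also write out the identity $4x_1^2x_2^2+(x_1^2-x_2^2)^2=|x|^4$ and the characteristic polynomial $\lambda^2-b^2|x|^{-4}$, which the paper leaves implicit.
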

\begin{proof}
The statement follows immediately by differentiating the vector field
$$
A(x) = b\, \left(-\frac{x_2}{|x|^2}, \frac{x_1}{|x|^2}\right).
$$  
Namely,
\begin{equation}\label{A_x}
A_x(x)= 
b\, \begin{pmatrix}
\frac{2x_1x_2}{|x|^4} & \frac{x_2^2 -x_1^2}{|x|^4}\\
\\
\frac{x_2^2 -x_1^2}{|x|^4} & -\frac{2x_1x_2}{|x|^4}
\end{pmatrix} = (A_x(x))^*,
\end{equation}
\end{proof}

\medskip
\noindent
Let $\varphi$ be the phase function defined in \eqref{phi}
\begin{equation*}
\varphi(t,x,y,\eta) = S(t,y,\eta) + (x-x^t)\cdot \xi^t + i \mathcal B\,|x-x^t|^2/2 . 
\end{equation*}

\medskip
\noindent
We now consider the matrix defined in \eqref{Z}
\begin{equation*}
Z(t,y,\eta) = \varphi_{x\eta}(t,x,y,\eta)|_{x=x^t(y,\eta)} 
= \xi^t_\eta  - i \mathcal B x^t_\eta.
\end{equation*}

\medskip
\begin{proposition}\label{Zinv}
If $\mathcal B>0$ then the matrix $Z$ defined in \eqref{Z} is a non-degenerate matrix-function for all $t\ge0$. Moreover 
\begin{equation}\label{detZ}
\det Z = ( \cos(\omega t) - i \mathcal B \omega^{-1} \sin(\omega t))^2 - b^2 \omega^{-2} |x^t|^{-4} \sin^2(\omega t)
\end{equation}
and the eigenvalues $\mu_{1,2}$ of $Z$ are equal to 
\begin{equation}\label{lambda12}
\mu_{1,2} =  \cos(\omega t) - i \mathcal B \omega^{-1} \sin(\omega t)   \pm b \omega^{-1} |x^t|^{-2} \sin(\omega t).
\end{equation}
\end{proposition}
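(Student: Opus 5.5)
The plan is to compute $Z$ directly from the explicit solution \eqref{HJsol}. This is legitimate because, under the standing assumption \eqref{y_eta}, the angular momentum satisfies $L=y\wedge\eta-b\neq 0$. By Proposition \ref{AngMom}, $L$ is conserved and vanishes whenever $x^t=0$, so $x^t(y,\eta)\neq 0$ for all $t\ge 0$. Hence \eqref{HJsol} holds globally and $A$ is smooth along the trajectory.

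\textbf{Computing $x^t_\eta$ and $\xi^t_\eta$.} In \eqref{HJsol} the term $A(y)$ does not depend on $\eta$, so
$$
x^t_\eta=\frac{\sin(\omega t)}{\omega}\,I .
$$
For $\xi^t=-\omega\sin(\omega t)\,y+\cos(\omega t)(\eta-A(y))+A(x^t)$, the chain rule gives
$$
\xi^t_\eta=\cos(\omega t)\,I+A_x(x^t)\,x^t_\eta=\cos(\omega t)\,I+\frac{\sin(\omega t)}{\omega}\,A_x(x^t).
$$
Index ordering of $\partial_\eta$ versus $\partial_x$ causes no ambiguity here. By Proposition \ref{A_x1} the matrix $A_x$ is symmetric, so $\xi^t_\eta$ and its transpose coincide.

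\textbf{Eigenvalues and determinant.} The two formulas combine to
$$
Z=\Big(\cos(\omega t)-i\mathcal B\omega^{-1}\sin(\omega t)\Big)I+\frac{\sin(\omega t)}{\omega}\,A_x(x^t).
$$
This is a scalar multiple of the identity plus a real symmetric matrix. Its eigenvectors are therefore those of $A_x(x^t)$. By Proposition \ref{A_x1} the eigenvalues of $A_x(x^t)$ are $\pm b|x^t|^{-2}$, which gives \eqref{lambda12} at once. Multiplying $\mu_1\mu_2$ as a difference of squares yields \eqref{detZ}. Equivalently, one can use ${\rm Tr}\,A_x=0$ and $\det A_x=-b^2|x|^{-4}$ from Proposition \ref{A_x1}.

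\textbf{Non-degeneracy.} Write $\mu_{1,2}=\big(\cos(\omega t)\pm b\omega^{-1}|x^t|^{-2}\sin(\omega t)\big)-i\mathcal B\omega^{-1}\sin(\omega t)$, whose real part is the bracket. Since $\mathcal B>0$, the imaginary part vanishes only when $\sin(\omega t)=0$. In that case the real part equals $\cos(\omega t)=\pm1\neq 0$. Thus $\mu_1\mu_2\neq 0$ for every $t\ge 0$, and $Z$ is invertible.

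There is no real obstacle in this argument. The only point needing care is justifying that \eqref{HJsol} and $A_x(x^t)$ make sense for all $t$, i.e.\ that $x^t$ never reaches the origin. That is exactly what Proposition \ref{AngMom} combined with \eqref{y_eta} provides.
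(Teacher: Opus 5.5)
Your proof is correct. The computation of $x^t_\eta$, $\xi^t_\eta$, $\det Z$ and the eigenvalues matches the paper's. The paper gets the eigenvalues from ${\rm Tr}\,Z$ and $\det Z$; you read them off the spectrum of the real symmetric matrix $A_x(x^t)$, which is equivalent.

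You differ genuinely in the non-degeneracy part. The paper proves $\xi^t_\eta (x^t_\eta)^* = x^t_\eta(\xi^t_\eta)^*$ by differentiating along the linearized Hamiltonian flow. It then writes $ZZ^* = \xi^t_\eta(\xi^t_\eta)^* + \mathcal B^2 x^t_\eta (x^t_\eta)^*$ and shows this sum of positive semidefinite matrices is positive definite. When $\sin(\omega t)\neq 0$ the second term is already positive definite, and when $\sin(\omega t)=0$ the first term equals $\Bbb I$.

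You instead read invertibility directly off \eqref{lambda12}. The common imaginary part $-\mathcal B\omega^{-1}\sin(\omega t)$ is nonzero unless $\sin(\omega t)=0$, and in that case the real part is $\pm 1$.

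Your route is shorter and uses only what the proposition itself asserts. In this example the paper's identity reduces to your remark that $x^t_\eta$ is a scalar matrix and $\xi^t_\eta$ is symmetric. The paper's argument, however, is the structural one from the general complex-phase FIO theory. There positivity of $ZZ^*$ comes from the symmetry of the linearized flow together with $\mathcal B>0$, so it extends to Hamiltonians with no explicit eigenvalues.

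A small plus on your side: you state explicitly, via \eqref{y_eta} and Proposition \ref{AngMom}, that $x^t$ never reaches the origin. This is what makes \eqref{HJsol} and $A_x(x^t)$ meaningful for all $t\ge 0$, and the paper uses it only tacitly.
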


\begin{proof}
From the explicit form of the solutions $x^t$ and $\xi^t$ \eqref{HJsol} we find
\begin{align*}
&x^t_\eta = \omega^{-1} \sin(\omega t) \, \Bbb I \quad {\rm and} \\ 
& \xi^t_\eta = \cos(\omega t) \,\Bbb I +A_x (x^t) \, \omega^{-1}\sin(\omega t),
\end{align*}
where $\Bbb I$ is the $2\times2$ identity matrix. Using \eqref{A_x} and simple computations we obtain \eqref{detZ}. 
Besides the trace of matrix $Z$ equals 
$$
{\rm Tr} \, Z = 2\cos(\omega t)  - 2i \mathcal B \, \omega^{-1}\sin(\omega t).
$$
This together with \eqref{detZ} allows us to conclude that the eigenvalues $\mu_{1,2}$ of the matrix $Z$ satisfy \eqref{lambda12}.

\noindent
In order to prove that $Z$ is not degenerate it is suﬃcient to show that the eigenvalues of $Z  Z^*$ are never equal to zero.

\noindent
Note first  that 
\begin{equation}\label{x_xi}
\xi^t_\eta (x^t_\eta)^* =  x^t_\eta (\xi^t_\eta)^*.
\end{equation}
Indeed, $x^t_\eta|_{t=0} = 0$ and thus the above identity is fulfilled for $t=0$. Besides,
we have 
\begin{align*}
& \frac{\partial}{\partial t} x^t_\eta = x^t_\eta h_{x\xi} + \xi^t_\eta h_{\xi\xi},
& \frac{\partial}{\partial t} \xi^t_\eta = -x^t_\eta h_{xx} - \xi^t_\eta h_{\xi x}.
\end{align*}
Using these equations we find
$$
 \frac{\partial}{\partial t}\left( \xi^t_\eta (x^t_\eta)^* \right) = 
  \frac{\partial}{\partial t}\left( x^t_\eta (\xi^t_\eta)^*\right)
 $$
 and consequently \eqref{x_xi} is true for all $t\ge0$. 
 
\noindent
Moreover due to \eqref{x_xi} we obtain
\begin{equation*}
Z  Z^* = (\xi^t_\eta - i \mathcal B x^t_\eta)  ((\xi^t_\eta)^* + i \mathcal B (x^t_\eta)^* ) 
= \xi^t_\eta (\xi^t_\eta)^* + \mathcal B^2 x^t_\eta (x^t_\eta)^* .
\end{equation*}
Finally, we have 
if $\omega t\not=\pi k$, $k\in \Bbb Z$, then $x^t_\eta (x^t_\eta)^*=  \omega^{-2}\sin^2(\omega t) \not= 0$  and 
if $\omega t =\pi k$, then $\xi^t_\eta = \cos(\omega t) \Bbb I =  \pm \Bbb I$ and 
$\xi^t_\eta (\xi^t_\eta)^* =  \Bbb I$.
\end{proof} 

\medskip
\begin{remark}
Note that if we wish to choose $\varphi$ to be a real phase function $(\mathcal B=0)$, then we have a problem when the matrix $Z$ degenerates at time $t$ at which one of the eigenvalues $\mu_{1,2}$ equals zero.  
\end{remark}

\medskip
\begin{proposition}\label{phi0}
Solutions of the equation $\varphi_\eta = 0$ are subsets of the set 
$$
\left\{(x,y,\eta): \, x\wedge y = - \frac{\sin(\omega t)}{\omega} \,(y\wedge \eta - b)\right\}.
$$
\end{proposition}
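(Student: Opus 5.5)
Our plan is to compute $\varphi_\eta$ explicitly using Proposition \ref{SProp} and the formulas for $x^t_\eta,\xi^t_\eta$ from the proof of Proposition \ref{Zinv}. Differentiating \eqref{phi} in $\eta$ and using $S_\eta = x^t_\eta \xi^t$, the terms $x^t_\eta\xi^t$ cancel against $-x^t_\eta \xi^t$ from the product rule, and we obtain
\begin{equation*}
\varphi_\eta = (\xi^t_\eta)^{T}(x-x^t) - i\mathcal B\,(x^t_\eta)^{T}(x-x^t) = Z^{T}(x-x^t).
\end{equation*}
(The transpose is harmless here: $x^t_\eta=\omega^{-1}\sin(\omega t)\,\Bbb I$ and $\xi^t_\eta=\cos(\omega t)\Bbb I+\omega^{-1}\sin(\omega t)A_x(x^t)$ are symmetric by Proposition \ref{A_x1}, so $Z^T=Z$.) Since Proposition \ref{Zinv} shows $Z$ is non-degenerate for $\mathcal B>0$, the equation $\varphi_\eta=0$ forces $x=x^t(y,\eta)$.

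Next we check that every point with $x=x^t(y,\eta)$ satisfies the stated identity. Using \eqref{HJsol},
\begin{equation*}
x^t\wedge y = \frac{\sin(\omega t)}{\omega}\,(\eta-A(y))\wedge y = -\frac{\sin(\omega t)}{\omega}\,\big(y\wedge\eta - y\wedge A(y)\big),
\end{equation*}
since $y\wedge y=0$. As in the proof of Proposition \ref{AngMom}, $y\wedge A(y)=b$. Hence $x\wedge y = -\omega^{-1}\sin(\omega t)(y\wedge\eta-b)$, which places the solution set inside the set in the statement. This inclusion is all that is claimed.

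The only delicate point is the reduction $\varphi_\eta=0\Rightarrow x=x^t$. One has to be careful that differentiating $i\mathcal B|x-x^t|^2/2$ produces $-i\mathcal B(x^t_\eta)^T(x-x^t)$ with the correct sign, so that the coefficient matrix is exactly $Z$ and not $\xi^t_\eta+i\mathcal B x^t_\eta$. Either matrix is invertible, since both have eigenvalues $\cos(\omega t)\mp i\mathcal B\omega^{-1}\sin(\omega t)\pm b\omega^{-1}|x^t|^{-2}\sin(\omega t)$ with imaginary part nonzero unless $\sin(\omega t)=0$, in which case the matrix is $\pm\Bbb I$. We also need $x^t\neq0$ so that $A_x(x^t)$ is defined; this is guaranteed by \eqref{y_eta} and Proposition \ref{AngMom}.
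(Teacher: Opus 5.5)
Your proposal is correct and is essentially the paper's own argument. Proposition \ref{SProp} cancels the $S_\eta - x^t_\eta\xi^t$ terms, so $\varphi_\eta$ is $Z$ applied to $x-x^t$; the non-degeneracy of $Z$ from Proposition \ref{Zinv} then forces $x=x^t(y,\eta)$, and wedging with $y$ (using $y\wedge A(y)=b$) gives the stated identity. Your direct eigenvalue check of invertibility and your remark that \eqref{y_eta} guarantees $x^t\neq 0$ are correct minor additions: the paper proves invertibility through the $ZZ^*$ argument and leaves $x^t\neq 0$ implicit.
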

\begin{proof}
Using Proposition \ref{SProp} we have 
\begin{equation*}
\varphi_\eta = S_\eta - x^t_\eta \xi^t + (x-x^t)\cdot (\xi^t_\eta  - i \mathcal B\,x_\eta^t) = (x-x^t) \cdot (\xi^t_\eta  - i \mathcal B\,x_\eta^t).
\end{equation*}
Since the matrix $\xi^t_\eta  - i \mathcal B\,x_\eta^t $ is not degenerate the equation $\varphi_\eta = 0$ implies   
\begin{equation}\label{eq12}
x- \cos(\omega t) \,y - \frac{\sin(\omega t)}{\omega}  \,(\eta - A(y))= 0 .
\end{equation}
The equation in \eqref{eq12} is equivalent to the system 
\begin{align*}
 & x_1 - \cos(\omega t) y_1 -  \frac{\sin(\omega t)}{\omega}  \left( \eta_1 + b \frac{y_2}{|y|^2}\right) = 0,\\
 & x_2 - \cos(\omega t) y_2 - \frac{\sin(\omega t)}{\omega}  \left( \eta_2 - b \frac{y_1}{|y|^2}\right) = 0
\end{align*} 
and therefore
$$
x\wedge y - \frac{\sin(\omega t)}{\omega}  (-y\wedge\eta + b) =0.
$$
\end{proof} 


\section{Fourier Integral Operator}\label{Sec:4}

\subsection{Substitution}

\noindent
Let us consider the wave equation
\begin{equation}\label{Schr} 
\left( i\, \alpha\, \frac{d}{dt} - H_{\alpha,b,\omega}\right) U_N(t) = R_N(t), 
\end{equation} 
where $R_N$ is an $\alpha$-FIO with the phase $\varphi$ and a symbol 
$\alpha^{N+2} r_N$.

\noindent
For the respective kernel of the integral operator $U_N$ we use the same notation $U_N= U_N(t,x,y)$ that we shall find in the  form of the following oscillatory integral
\begin{equation*}
U_N(t,x,y) = (2\pi\alpha)^{-2} \int_{\Bbb R^2} e^{i \varphi(t,x,y,\eta)/\alpha} u_N(t,y,\eta)\, d\eta,
\end{equation*} 
where the phase function $\varphi$ is defined in \eqref{phi}.

\noindent
Substituting $U_N$ into \eqref{Schr} we obtain
\begin{multline*} 
\left(i \alpha \, \frac{\partial }{\partial t} - H_{\alpha,b,\omega}  \right) U_N(t,x,y) \\
= 
\left( i \alpha \, \frac{\partial }{\partial t} -\frac12 (-i\alpha \nabla_x - A)^2  - \frac{\omega^2}{2} |x|^2 \right) U_N(t,x,y) \\
= (2\pi)^{-2} \int_{\Bbb R^2} e^{i\varphi(t,x,y,\eta)/\alpha} \left(
i\alpha\frac{\partial}{\partial t} u_N- 
(g + O(\alpha^2)) \,u_N \right) \, d\eta,
\end{multline*} 
where 
$$
g= g (t,x,y,\eta) = \varphi_t + h(x,\varphi_x) + \alpha \mathcal B.
$$
Applying \eqref{HJ} for the derivative of the phase function $\varphi_t $ we have
\begin{multline}\label{phi_t}
\varphi_t = h_\xi(x^t, \xi^t) \cdot \xi^t - h(x^t,\xi^t) 
 - \dot{x}^t\cdot \xi^t + (x-x^t)\cdot \dot{\xi}^t
-i \dot{x}^t \cdot (x-x^t) \mathcal B\\
 = - h(x^t,\xi^t) - (x-x^t) h_x(x^t,\xi^t) 
 -i \mathcal B \,h_\xi(x^t,\xi^t) \cdot (x-x^t).
\end{multline} 
Using the Taylor expansion we find
\begin{multline}\label{h(x,phi_x)}
h(x,\varphi_x) = h(x, \xi^t + i \mathcal B\,(x-x^t) ) \\
= h(x^t,\xi^t) + h_x(x^t, \xi^t) (x-x^t) 
+ i \mathcal B \,h_\xi(x^t, \xi^t) \cdot (x-x^t)  \\
+ 
\frac12 \sum_{|\nu_1 + \nu_2| = 2} \left(\partial_x^{\nu_1}\partial_\xi^{\nu_2} h\right)
(x-x^t)^{\nu_1} (i\mathcal B \,(x-x^t))^{\nu_2} + O(|x-x^t|^3).
\end{multline} 
Summing up the expressions \eqref{phi_t} and \eqref{h(x,phi_x)}, we find that the terms with $\alpha^0$ and with $(x - x^t)$ cancel. The rest can be rewritten as
$$
g = g_1 + \alpha \mathcal B + O(|x-x^t|^3),
$$
where
$$
g_1 = \frac12 \sum_{|\nu_1 + \nu_2| = 2} \left(\partial_x^{\nu_1}\partial_\xi^{\nu_2} h\right)
(x-x^t)^{\nu_1} (i\mathcal B\, (x-x^t))^{\nu_2}. 
$$

\smallskip
\subsection{Integration by parts} 
Note that 
$$
\frac{\partial}{\partial\eta} e^{i\varphi/\alpha} = \frac{i}{\alpha}  e^{i\varphi/\alpha} \varphi_\eta = 
\frac{i}{\alpha} e^{i\varphi/\alpha} \left( S_\eta - x^t_\eta \xi^t + \xi_\eta(x-x^t) -i \mathcal B\,x^t_\eta (x-x^t)\right).
$$
Due to Proposition \ref{SProp} $S_\eta - x^t_\eta \xi^t = 0$. Moreover \eqref{Z} implies that $Z =  Z(t,y,\eta)$ and we have 
$$
\frac{\partial}{\partial \eta} e^{i\varphi/\alpha} = \frac{i}{\alpha}  e^{i\varphi/\alpha} (x-x^t) Z(t,y,\eta).
$$
Proposition \ref{Zinv} implies that the matrix $Z$ is invertible and thus
$$
(x-x^t) e^{i\varphi/\alpha} = -i\alpha Z^{-1} \frac{\partial}{\partial \eta} e^{i\varphi/\alpha}. 
$$
Applying this equation and integrating twice by parts in order to convert all powers of 
$(x-x^t)$ into powers of $\alpha$, we obtain
\begin{multline*}
\int_{\Bbb R^2} e^{i\varphi/\alpha} g_1 u_N\, d\eta  = \frac{\alpha}{2i} \int_{\Bbb R^2} 
e^{i\varphi/\alpha} \Big \{ {\rm Tr}\, \left[ Z^{-1} (h_{xx} + i 2 \mathcal B\, h_{x\xi} - \mathcal B^2 ) \,x^t_\eta \right] u_N  \\
+  \sum_{|\beta| \le 2} O(\alpha) \partial^\beta u_N \Big\}.
\end{multline*} 
Using the Hamilton-Jacobi equations \eqref{HJ} we find
$$
\dot{x}^t_\eta = x^t_\eta h_{x\xi} + \xi^t_\eta h_{\xi\xi}, \qquad 
\dot{\xi}^t_\eta = -x^t_\eta h_{xx} - \xi^t_\eta h_{\xi x}.
$$
This implies 
\begin{multline*}
x^t_\eta (h_{xx} + i 2 \mathcal B\, h_{x\xi} - \mathcal B^2 )  = 
(x^t_\eta h_{xx} + \xi^t_\eta h_{x\xi}) + i \mathcal B\, (x^t_\eta h_{x\xi} + \xi^t_\eta ) \\
- (\xi^t_\eta - i \mathcal B\, x^t_\eta) h_{x\xi} - (\xi^t_\eta - i \mathcal B\, x^t_\eta) i \mathcal B 
= -\dot{Z} - Z h_{x\xi} - i\mathcal B\, Z.
\end{multline*}
Due to  \eqref{A_x} we have  ${\rm Tr} \,h_{x\xi} = 0$ and finally obtain
\begin{multline*} 
\left(i \alpha \, \frac{\partial }{\partial t} - H_{\alpha,b,\omega}  \right) U_N(t,x,y)\\
= 
i(2\pi)^{-2} \, \alpha^{-1} \, \int_{\Bbb R^2} e^{i\varphi/\alpha} 
\left[ \frac{\partial}{\partial t} - \frac12 {\rm Tr} \left(Z^{-1} \dot{Z}  \right)
+ O(\alpha)\right] \,u_N\, d\eta.
\end{multline*}
Therefore the equation \eqref{Schr} is reduced to  
\begin{equation}\label{symb1}
i\alpha \left[ \frac{\partial}{\partial t} - \frac12 {\rm Tr} \left(Z^{-1} \dot{Z}  \right)
+ O(\alpha)\right] \, u_N = \alpha^{N+2} r_N.
\end{equation}
Assuming $u_N = \sum_{k=0}^N \alpha^k u_{(k)}$ and equating in \eqref{symb1} the terms containing the same power of the parameter $\alpha$, we obtain the recurrent system of transport equations
\begin{equation}\label{symb2}
\frac{\partial}{\partial t} u_{(k)}  - \frac12 {\rm Tr} \left(Z^{-1} \dot{Z}  \right)
\,  u_{(k)} =O(1) u_{(k-1)}.
\end{equation}
Here by $O(1)$ we denote $u(-1) = 0$ and $O(1)$ is a differential operator in $\eta$ with coefficients of order 1 with $O(1) = \alpha^{-1} O(\alpha)$ and $O(\alpha)$ is the operator appearing in \eqref{symb1}. 

\noindent
In order to solve the transport equations \eqref{symb2} we need to define the initial conditions when solving \eqref{symb2}. For that we consider 
$$
U_N(0,x,y) = (2\pi \alpha)^{-2} \int_{\Bbb R^2} e^{i(x-y)\eta/\alpha} 
e^{-\mathcal{B}|x-x^t|^2/2\alpha} u_N(0,y,\eta) \, d\eta.
$$ 
The initial conditions for the functions $u_{(k)}$ are obtained by expanding the term 
$\exp(-\mathcal B|x-x^t|^2/2\alpha)$ in the Taylor series and integrating by parts replacing the terms with $x-x^t$ with the respective terms involving powers of $\alpha$.

\noindent
In particular, the first term in the mentioned Taylor expansion equals $1$, and we immediately obtain that 
$$
u_{(0)}(t,y,\eta) =  \theta_\varepsilon(y) \theta_\varepsilon(x^t(y,\eta)) \, 
(\det Z(t,y,\eta))^{1/2}.
$$
This finally leads us to the expression 
\begin{multline}\label{U_N}
U_N(t,x,y) = (2\pi\alpha)^{-2} \int_{\Bbb R^2} e^{i\varphi(t,x,y,\eta)/\alpha} 
 \theta_\varepsilon(y) \theta_\varepsilon(x^t(y,\eta))\\
\times \left((\det Z(t,y,\eta))^{1/2} + \sum_{k=1}^N \alpha^k u_{(k)}(t,y,\eta)\right) \, d\eta.
\end{multline}
%

\medskip
\section{A Mehler type formula} \label{Sec:5}

\noindent
Assume $\sin(\omega t)\neq 0$, we work locally where the trajectory avoids the singularity. Fix $(t,x,y)$, the stationary point in $\eta$ is given by the equation
$$
\varphi_\eta=(x-x^t)Z,
$$
which is equivalent to
$$x=x^t(y,\eta).$$
As $x^t_\eta=\frac{\sin(\omega t)}{\omega}I$, the unique solution is 
\begin{align}\label{critical_points}
    \eta_*(t,x,y)=A(y)+\frac{\omega}{\sin(\omega t)}(x-\cos(\omega t)y).
\end{align}

\begin{theorem}
    Let $K$ be a compact subset of the region where $\sin(\omega t)\neq 0$, the critical point \eqref{critical_points} is valid and the corresponding trajectory avoids the origin. Then, uniformly on $K$, we have
    \begin{align}\label{new}
        U_N(t,x,y)=\frac{\omega e^{iS(t,y,\eta_0)/\alpha}}{2\pi i\alpha\sin(\omega t)}\sum_{k=0}^N\alpha^kC_k(t,x,y)+O(\alpha^{N}),
    \end{align}
    where the explicit expression of $S(t,y,\eta_0(t,x,y))$ is given by
    \begin{multline}\label{S}
       S(t,y,\eta_*(t,x,y))= \frac{\omega}{2\sin(\omega t)}\left(\cos(\omega t)(|x|^2+|y|^2)-2x\cdot y\right)\\
       +b(\Arg x-\Arg y+2\ell\pi),
    \end{multline}
and the coefficients $C_j$ are the ordinary complex stationary phase Taylor coefficients in the variable $\eta$, explicitly, for $0\leq m\leq N$
    \begin{align*}
        C_m(t,x,y)=(\det Z(t,y,\eta_*))^{-1/2}\sum_{k=0}^mL_{m-k}u_{(k)}(t,y,\eta)|_{\eta=\eta_*(t,x,y)}.
    \end{align*}
    Here
    \begin{align}\label{L}
        L_jf=\sum_{\mu=0}^{2j}\frac{i^{-j}2^{-j-\mu}}{(j+\mu)!\mu!}\left[\left\langle \varphi^{-1}_{\eta\eta}D_\eta,D_\eta\right\rangle^{j+\mu}(\varphi^\mu_{\geq 3} f)\right]_{\eta=\eta_*},\quad f\in C^\infty(\mathbb{R}^2),
    \end{align}
    and
\begin{multline*}
\varphi_{\geq 3}(t,x,y,\eta)=\varphi(t,x,y,\eta)-\varphi(t,x,y,\eta_*)\\
-\frac{1}{2}(\eta-\eta_*)\cdot\varphi_{\eta\eta}(t,x,y,\eta_*)(\eta-\eta_*).
\end{multline*}
In particular, in the region where the cutoffs are identically one, one has
    \begin{align*}
        U_N(t,x,y)=\frac{\omega e^{iS(t,y,\eta_*)/\alpha}}{2\pi i\alpha\sin(\omega t)}(1+O(\alpha)).
    \end{align*}
\end{theorem}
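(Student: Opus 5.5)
The plan is to apply the complex stationary phase theorem (H\"ormander's Theorem 7.7.5 with a complex phase) to the $\eta$-integral in \eqref{U_N}, for $(t,x,y)$ in $K$. There are three inputs to check. First, $\operatorname{Im}\varphi\ge 0$; this holds since $\operatorname{Im}\varphi=\mathcal B|x-x^t|^2/2$. Second, $\varphi_\eta=(x-x^t)Z$ vanishes only where $x=x^t(y,\eta)$. By Proposition \ref{Zinv} and the explicit form \eqref{HJsol}, for $\sin(\omega t)\ne0$ this has the unique solution $\eta_*$ in \eqref{critical_points}. Third, the critical point must be nondegenerate. At $\eta=\eta_*$ we have $\varphi_{\eta\eta}=x^t_\eta Z$, because the term $(x-x^t)\partial_\eta Z$ vanishes there. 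Hence
$$\varphi_{\eta\eta}(\eta_*)=-\frac{\sin(\omega t)}{\omega}\,Z(t,y,\eta_*),$$
which is invertible. Away from $\eta_*$ the factor $e^{-\mathcal B|x-x^t|^2/2\alpha}$ and repeated integration by parts with $Z^{-1}\partial_\eta$ give $O(\alpha^\infty)$ contributions. The cutoffs keep the support away from the singular set of $A$, and the hypothesis on $K$ guarantees that the trajectory through $\eta_*$ avoids the origin, so all amplitudes are smooth near $\eta_*$. Uniformity on $K$ follows from compactness and smooth dependence of $\eta_*$ and $Z$ on $(t,x,y)$.

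Stationary phase then gives
$$(2\pi\alpha)^{-2}\,e^{i\varphi(\eta_*)/\alpha}\,(2\pi\alpha)\,\bigl(\det(\varphi_{\eta\eta}/i)\bigr)^{-1/2}\sum_j\alpha^jL_j u_N,$$
with $L_j$ exactly as in \eqref{L}. At $\eta_*$ the second and third terms of $\varphi$ vanish, so $\varphi(\eta_*)=S(t,y,\eta_*)$. For the prefactor,
$$\det(\varphi_{\eta\eta}/i)=-\frac{\sin^2(\omega t)}{\omega^2}\det Z.$$
With the branch fixed by continuity this gives
$$(2\pi\alpha)^{-1}\,\frac{\omega}{i\sin(\omega t)}\,(\det Z)^{-1/2}=\frac{\omega}{2\pi i\alpha\sin(\omega t)}(\det Z)^{-1/2},$$
up to a sign absorbed into the branch convention. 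Collecting the powers of $\alpha$ from $u_N=\sum\alpha^ku_{(k)}$ and $\sum\alpha^jL_j$ yields the coefficients $C_m$. The truncation error is $O(\alpha^{N})$ after the $\alpha^{-1}$ prefactor, and terms with $k+j>N$ are absorbed into it. Where the cutoffs equal one, $C_0=(\det Z)^{-1/2}L_0u_{(0)}=(\det Z)^{-1/2}(\det Z)^{1/2}=1$, which gives the last claim.

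It remains to evaluate $S(t,y,\eta_*)$, using the formula for $S$ derived in Section \ref{Sec:3}:
\begin{align*}
S(t,y,\eta)=&\frac{\cos(2\omega t)-1}{2}\,y\cdot\eta+\frac{\sin(2\omega t)}{4\omega}\left(|\eta-A(y)|^2-\omega^2|y|^2\right)\\
&+b\left(\Arg x^t-\Arg y+2\ell\pi\right).
\end{align*}
Substitute $\eta_*-A(y)=\omega(x-\cos(\omega t)y)/\sin(\omega t)$, and use $y\cdot\eta_*=y\cdot(\eta_*-A(y))$, which holds because $y\cdot A(y)=0$. A short trigonometric simplification, using $\cos 2\theta-1=-2\sin^2\theta$ and $\sin2\theta=2\sin\theta\cos\theta$, collapses the quadratic part to
$$\frac{\omega}{2\sin(\omega t)}\left(\cos(\omega t)(|x|^2+|y|^2)-2x\cdot y\right).$$
In the angular term, $x^t=x$ at $\eta_*$. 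This gives \eqref{S}.

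I expect the main obstacle to be the branch and sign bookkeeping. The square root $(\det(\varphi_{\eta\eta}/i))^{-1/2}$ in the stationary phase formula must be matched with the continuous branch of $(\det Z)^{1/2}$ chosen in Theorem \ref{main1} and with the factor $(\sin(\omega t)/\omega)^2$, so that the product is exactly $\omega/(i\sin(\omega t))$ rather than its negative. I would handle this by continuity from small $t>0$: there $Z\approx\mathbb I$, $\det Z\approx 1$, and
$$\varphi_{\eta\eta}/i\approx i\,\frac{t}{1}\,\mathbb I\cdot(-1)\cdot(-1)^{0}=-\frac{t}{i}\,\mathbb I\cdot(-1),$$
so the product reduces to the free-particle normalisation $1/(2\pi i\alpha t)$. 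The product then stays continuous on each interval between zeros of $\sin(\omega t)$.
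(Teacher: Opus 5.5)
Your proposal is correct and follows the paper's proof. Both apply complex stationary phase in $\eta$ at the unique critical point $\eta_*$, with Hessian $\varphi_{\eta\eta}(\eta_*)=-\frac{\sin(\omega t)}{\omega}Z$, so that the Gaussian factor's $(\det Z)^{-1/2}$ cancels the $(\det Z)^{1/2}$ in $u_{(0)}$; both use the $L_j$ of H\"ormander's Theorem 7.7.5 (which the paper re-derives by expanding $e^{i\varphi_{\ge 3}/\alpha}$ against the Gaussian), the identity $\varphi(\eta_*)=S(t,y,\eta_*)$, and direct substitution for $S$, which you carry out more explicitly than the paper.

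Two small fixes are needed. First, $\varphi_{\eta\eta}(\eta_*)=-x^t_\eta Z$, so $\varphi_{\eta\eta}/i\approx it\,\mathbb{I}$ for small $t>0$; your display has the opposite sign. Second, continuity from small $t$ fixes the sign of the prefactor only for $0<\omega t<\pi$. The sign is not a convention to be absorbed: it stays $\omega/(i\sin(\omega t))$ on later intervals because both eigenvalues $\mu_{1,2}$ of $Z$ cross the negative real axis simultaneously at odd multiples of $\pi/\omega$, so the two sign flips of the square root cancel — a check the paper also omits.
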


\begin{proof}
    We apply Taylor's formula to the $\eta$ integral in the original formula. First write
    $$\eta=\eta_*+q,$$
    then
    $$\varphi(t,x,y,\eta_*+q)=\varphi(t,x,y,\eta_*)+\frac{1}{2}q\cdot\varphi_{\eta\eta}(t,x,y,\eta_*)q+\varphi_{\geq3}(t,x,y,\eta_*+q).$$
    The imaginary part of $\varphi_{\eta\eta}(t,x,y,\eta_*)$ is positive definite when $\mathcal B>0$ and $\sin(\omega t) \neq 0$; indeed,
    $$\text{Im} \varphi_{\eta\eta}(t,x,y,\eta_*)=\mathcal B\left(\frac{\sin(\omega t)}{\omega}\right)^2I.$$
    Therefore the local complex Gaussian integral is convergent and has the factor
    \begin{align}\label{factor}(2\pi\alpha)\det\left(\frac{\varphi_{\eta\eta}(t,x,y,\eta_*)}{i}\right)^{-1/2}=(2\pi\alpha)\left(\frac{i\sin(\omega t)}{\omega}\right)^{-1}(\det Z(t,y,\eta_*))^{-1/2}.
    \end{align}
    The Gaussian Taylor identity implies
 \begin{multline}\label{GT}
        \int_{\mathbb{R}^2}
e^{\frac{i}{2\alpha}q\cdot\varphi_{\eta\eta}(t,x,y,\eta_*)q}F(q)dq\\
\sim (2\pi\alpha)\det\left(\frac{\varphi_{\eta\eta}(t,x,y,\eta_*)}{i}\right)^{-1/2}\sum_{\nu=0}^\infty \frac{i^{-\nu}\alpha^\nu}{2^\nu \nu!}\left\langle \varphi^{-1}_{\eta\eta}D_\eta,D_\eta\right\rangle^{\nu}F(0).
\end{multline}
    Now expand the non-quadratic part of the phase
    $$
    e^{i\varphi_{\geq3}/\alpha}=\sum_{\mu\geq 0}\frac{i^\mu}{\mu!}\alpha^{-\mu}\varphi^\mu_{\geq 3}.
    $$
Applying \eqref{GT} to 
    $$
    F(q)=\varphi_{\geq 3}^\mu(t,x,y,\eta_*+q)a_{(k)}(t,x,y,\eta_*+q)
    $$
    produces the power $\alpha^{\nu-\mu+k}$, thus 
    $$\nu-\mu+k=m.$$
    Writing $j=m-k$ gives $\nu=j+\mu$. Since 
    $$\varphi_{\geq 3}^\mu=O(|\eta-\eta_*|^{3\mu}),$$
    and $\left\langle \varphi^{-1}_{\eta\eta}D_\eta,D_\eta\right\rangle^{\nu}$ differentiates $2\nu$ times. Then the term survives if
    $$2\nu\geq 3\mu,$$
    equivalently,
    $$\mu\leq 2j,$$
    and the surviving terms are exactly \eqref{L}.
    Finally, plugging \eqref{factor} back into the identity we obtain
\begin{multline*}(2\pi \alpha)^{-2}(\det Z(t,y,\eta_*))^{1/2}(2\pi\alpha)\left(\frac{i\sin(\omega t)}{\omega}\right)^{-1}(\det Z(t,y,\eta_*))^{-1/2}\\
=\frac{\omega}{2\pi i\alpha\sin(\omega t)}.
\end{multline*}
    This proves \eqref{new}. The expression for \eqref{S} follows by substituting \eqref{critical_points} back to $S(t,y,\eta_*)$.
\end{proof}

\medskip
\section{More on the stationary phase method} \label{Sec:6}

\noindent
We are interested in asymptotic behaviour as $\alpha\to 0$
of the function
$$
 \int U_N (t,x,y) \rho(y) e^{iy\cdot\eta_0/\alpha} dy,
$$
where $\rho \in C_0^\infty(\Bbb R^2)$, $\rho\ge0$.

\medskip
\noindent
We have

\begin{theorem}
Let $U_N$ be the approximate solution from Theorem \ref{main1} and let a point $(t,y_0,\eta_0)$
be such that $y_0= y_0 (t,x,\eta_0)$ is the solution of the following system of equations
\begin{align*}
&x\wedge y= - (y\wedge \eta_0 - b)\, \frac{\sin(\omega t)}{\omega}, \\
&x\cdot y = \cos(\omega t) |y|^2 + \frac{\sin(\omega t)}{\omega} \eta_0 \cdot y
\end{align*}
defined by the equation $x=x^t(y,\eta_0)$. Furthermore, assume $\det x^t_y(y_0,\eta_0)\neq 0$.
Then for any function 
$$
\rho \in C_0^\infty \left(\Bbb R^2\setminus(\{0\}\cup \{y:y\wedge\eta_0 = b\})\right),
$$ 
we have
\begin{multline*}
\int  U_N(t,x,y) \rho (y) e^{iy\cdot \eta_0/\alpha} dy \\
= \rho(y_0) e^{i y_0\cdot \eta_0/\alpha} 
e^{iS (t, y_0 , \eta_0)/\alpha} \,
e^{-i{\pi/2} \,m(t,y_0)} \left|\cos^2(\omega t) -\frac{b^2}{\omega^2} |y_0|^{-4}\, \sin^2(\omega t)\right|^{-1/2} \\
+  O (\alpha), 
\end{multline*}
where
$m(t,y,\eta)\equiv m(\gamma^t)$ is the Morse index of the trajectory 
$$
\gamma^t = \{ x^s (y,\eta): \, \, 0\le s\le t\}.
$$
\end{theorem}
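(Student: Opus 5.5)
We substitute the oscillatory integral representation \eqref{U_NN} of $U_N$ and write the quantity of interest as a single oscillatory integral in the variables $(y,\eta)\in\Bbb R^4$:
$$
\int U_N(t,x,y)\rho(y)e^{iy\cdot\eta_0/\alpha}\,dy=(2\pi\alpha)^{-2}\int\!\!\int e^{i\Phi(y,\eta)/\alpha}\,u_N(t,y,\eta)\rho(y)\,dy\,d\eta,
$$
with $\Phi(y,\eta)=\varphi(t,x,y,\eta)+y\cdot\eta_0$. Since $\rho$ is supported away from $0$ and from $\{y\wedge\eta_0=b\}$, we may take $\varepsilon$ small so that the cut-offs equal one near the relevant points. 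By Proposition \ref{AngMom}, trajectories with $y\wedge\eta\neq b$ never hit the origin, so the explicit formulas \eqref{HJsol} hold there. We then apply the stationary phase method in the four variables $(y,\eta)$. Because $\operatorname{Im}\Phi=\mathcal B|x-x^t|^2/2\ge0$, the theory of complex phases of \cite{LSV, LapSig} applies, and only points where $\operatorname{Im}\Phi=0$ and $\nabla\Phi=0$ contribute modulo $O(\alpha^\infty)$.

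\emph{Critical points.} By the computation in the proof of Proposition \ref{phi0}, $\Phi_\eta=(x-x^t)Z$. Since $Z$ is invertible (Proposition \ref{Zinv}), $\Phi_\eta=0$ forces $x=x^t(y,\eta)$. At such points, Proposition \ref{SProp} gives
$$
\Phi_y=S_y-x^t_y\xi^t+\eta_0=\eta_0-\eta,
$$
so the critical set is $\eta=\eta_0$, $x=x^t(y,\eta_0)$. Taking wedge and dot products of $x=x^t(y,\eta_0)$ with $y$ yields exactly the two scalar equations in the statement, as in Proposition \ref{phi0}. The assumption $\det x^t_y(y_0,\eta_0)\ne0$ makes $y_0$ a nondegenerate, locally unique solution. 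The critical value is $\Phi=S(t,y_0,\eta_0)+y_0\cdot\eta_0$, which gives the phase factors in the claim.

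\emph{Hessian.} At the critical point we compute the $4\times4$ Hessian:
$$
\Phi_{\eta\eta}=(x^t_\eta)^{T}Z,\qquad \Phi_{y\eta}=(x^t_y)^{T}Z,\qquad \Phi_{yy}=(x^t_y)^{T}(\xi^t_y-i\mathcal Bx^t_y)+\text{terms vanishing at }x=x^t.
$$
A Schur-complement computation, combined with the symmetry relation \eqref{x_xi} and its $y$-analogue, shows that
$$
\det\Phi''=\det Z\cdot\det x^t_y\cdot c,
$$
where $c$ is a harmless constant. The leading stationary phase term is then
$$
(2\pi\alpha)^{-2}(2\pi\alpha)^{2}\,u_{(0)}\rho(y_0)\,\bigl(\det(\Phi''/i)\bigr)^{-1/2}.
$$
The factor $(\det Z)^{1/2}$ coming from $u_{(0)}$ cancels $(\det Z)^{-1/2}$, leaving $(\det x^t_y)^{-1/2}$ with a phase.

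\emph{Computing $\det x^t_y$.} From \eqref{HJsol} we have
$$
x^t_y=\cos(\omega t)\,\Bbb I-\omega^{-1}\sin(\omega t)A_x(y).
$$
Since $A_x$ is trace-free with determinant $-b^2|y|^{-4}$ (Proposition \ref{A_x1}), this gives
$$
\det x^t_y=\cos^2(\omega t)-\frac{b^2}{\omega^2}|y_0|^{-4}\sin^2(\omega t),
$$
which matches the modulus in the claim. The $O(\alpha)$ remainder comes from the higher stationary phase terms and from $\alpha u_{(k)}$, $k\ge1$.

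\emph{Main obstacle: the phase.} We must identify the argument of $\bigl(\det Z\,\det x^t_y/(\text{const})\bigr)^{-1/2}$, taken along the continuous branch in $t$ fixed in Theorem \ref{main1}, with $e^{-i\pi m/2}$. Our approach is to deform $\mathcal B$, or equivalently to track $\arg\det Z$ continuously in $t$ using the eigenvalues \eqref{lambda12}. Each time an eigenvalue of the real matrix $x^t_y$ passes through zero (a focal point), the continuous square root of $\det Z$ picks up a $\pi/2$ jump relative to the real-phase formula. The number of such crossings is the Morse index $m(\gamma^t)$. This is the standard Maslov-index argument of \cite{LSV}, and carrying it out with the explicit eigenvalues is the delicate step.
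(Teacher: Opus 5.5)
Your proposal is essentially the paper's proof. It uses the same four-variable stationary phase for $\psi=\varphi+y\cdot\eta_0$, the same critical set $\eta=\eta_0$, $x=x^t(y,\eta_0)$ obtained from Propositions \ref{Zinv} and \ref{SProp}, the identity $\det{\rm Hess}\,\varphi=\det Z\cdot\det x^t_y$ with the $(\det Z)^{1/2}$ from $u_{(0)}$ cancelling, and the literature (the paper cites \cite{LapSig}) to identify the leftover phase with $e^{-i\pi m/2}$; your explicit evaluation of $\det x^t_y$ via Proposition \ref{A_x1} supplies a step the paper leaves implicit. Three small corrections are needed:
\begin{enumerate}
\item Every Hessian block should carry a minus sign, as in \eqref{Hess}. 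This does not change the $4\times 4$ determinant, but it is needed for ${\rm Im}\,\Phi''\ge 0$ and hence for the branch of $\det(\Phi''/i)^{-1/2}$.
\item Your constant is exactly $c=1$. The paper's factorization of \eqref{Hess} through $\begin{pmatrix}\Bbb I&(x^t_y)^{-1}x^t_\eta\\ 0&\Bbb I\end{pmatrix}$ shows this using only the assumed invertibility of $x^t_y$, whereas a Schur complement in $\Phi_{\eta\eta}=-\omega^{-1}\sin(\omega t)Z$ breaks down at $\sin(\omega t)=0$.
\item In the Morse-index step, $(\det Z)^{1/2}$ does not jump, since it is continuous and nonvanishing by Proposition \ref{Zinv}. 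What jumps is the stationary-phase branch of $\det(\Phi''/i)^{-1/2}$, at the zeros of $\det x^t_y$. The eigenvalues \eqref{lambda12} of $Z$ involve $|x^t|$ rather than $|y_0|$, so they do not locate those zeros.
\end{enumerate}
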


\begin{proof}
Let us consider the expression
$$
(2\pi \alpha)^{-2} \int_{\Bbb R^2} \int_{\Bbb R^2} e^{i\psi/\alpha} \rho(y) \, d\eta dy,
$$
where  $\psi = \varphi + y\cdot\eta_0$.

\noindent
We apply the stationary phase expansion to this integral. Let us first find stationary points of the phase $\psi$ in $\eta$ and $y$.

\noindent
Due to Proposition \ref{phi0} the solution of the equation $\psi_\eta = \varphi_\eta = 0$ 
is achieved on the set $x=x^t(y,\eta_0)$, that is set of points $(x,y,\eta_0)$ such that 
\begin{align}\label{st_point}
\begin{split}
&x\wedge y = - (y\wedge \eta_0 - b)\, \frac{\sin(\omega t)}{\omega},\\
&x\cdot y = \cos{\omega t} |y|^2 + \frac{\sin(\omega t)}{\omega} \eta\cdot y.
\end{split}
\end{align}
Furthermore using Proposition \ref{SProp} we find
\begin{multline}
\psi_y|_{x=x^t}  = \left(\varphi_y + \eta_0\right)|_{x=x^t} \\
= \left(S_y + \eta_0 - x_y^t \xi^t + (x-x^t) (\xi^t_y - i \mathcal B\, x_y^t)\right)|_{x=x^t}  \\
=  - \eta + \eta_0 = 0.
\end{multline}
Therefore the only stationary point is $z_0 = (y_0, \eta_0)$, where $y_0 = y_0(t,x,\eta_0) $
is the solution of the system of equations \eqref{st_point}  
where due to \eqref{y_eta} $y\wedge \eta_0 - b\not=0$. 

\noindent
We now compute the Hessian of the phase function $\psi$ at the stationary point $z_0$.
Note that
$$
{\rm Hess}\,\psi = {\rm Hess}\,\varphi
$$
and at the stationary point $z_0$ 
\begin{align*}
& x_y^t = \cos(\omega t) \Bbb I - A_y(y_0)\, \frac{\sin(\omega t)}{\omega} \\
&\xi^t_y = - \omega \sin(\omega t) \Bbb I -  \cos(\omega t) A_y(y_0) + A_x(x^t(y_0,\eta_0))\, x_y^t(y_0,\eta_0).
\end{align*} 
Denote by $Y$ the matrix-function 
$$
Y(t,y,\eta) = \xi^t_y(y,\eta) - i\, \mathcal B\, x^t_y(y,\eta).
$$ 
Then 
\begin{equation}\label{Hess}
{\rm Hess}\,\varphi = - 
\begin{pmatrix}
Y\, x^t_y & I+ Y\,x^t_\eta\\
Z\, x^t_y & Z\, x^t_\eta
\end{pmatrix}.
\end{equation}
Clearly $\varphi_{\eta y} = (\varphi_{y\eta})^T$ which means that 
$$
I+ Y\,x^t_\eta = (Z\, x^t_y)^T.
$$
Factorising the Hessian \eqref{Hess} 
$$
{\rm Hess}\,\varphi = -
\begin{pmatrix}
Y\, x^t_y & \Bbb I\\
Z\, x^t_y & 0
\end{pmatrix} 
\begin{pmatrix}
\Bbb I & (x^t_y)^{-1} x^t_\eta \\
0 & \Bbb I
\end{pmatrix} 
$$
we find
$$
{\rm det } ( {\rm Hess}\,\varphi ) = {\rm det } \, Z \cdot {\rm det }\, x^t_y = {\rm det } \, (\xi^t_\eta - i \mathcal B\, x^t_\eta) 
 \cdot {\rm det }\, x^t_y  
$$
Applying the stationary phase formula (see e.g. \cite{H2}, Section 7.7) we first note $({\rm det}\, Z)^{1/2}$ appearing in the asymptotic expansion in Theorem \ref{main1} cancels with  $({\rm det}\, Z)^{-1/2}$. Therefore the term $({\rm Hess}\, \varphi)^{-1/2} $ appearing in the stationary phase formula is reduced to 
the term, see \cite{LapSig},
$$
({\rm det}\, x^t_y)^{-1/2} = e^{-i{\pi/2} \,m(t,y_0)} \left|\cos^2(\omega t) -\frac{b^2}{\omega^2} |y_0|^{-4}\, \sin^2(\omega t)\right|^{-1/2}.
$$
\end{proof}

\end{document}